\documentclass[11pt]{amsart}
\usepackage[T1]{fontenc}
\usepackage{amsmath,amssymb,amsthm}
\usepackage{booktabs}
\usepackage[hidelinks]{hyperref}

\newtheorem{theorem}{Theorem}[section]
\newtheorem{proposition}[theorem]{Proposition}

\theoremstyle{definition}

\newtheorem{remark}[theorem]{Remark}

\newcommand{\Ball}[1]{B_{#1}}
\newcommand{\id}{\mathrm{e}}
\newcommand{\GOne}{G_1}
\newcommand{\GTwo}{G_2}
\newcommand{\GThree}{G_3}
\renewcommand{\P}{\mathrm{P}}
\newcommand{\Z}{\mathbb{Z}}

\title[The non-UP landscape at the global minimum]{The quantitative
non-unique-product landscape at the global minimum:\\ the Nielsen--Soelberg
groups}
\author{Moe Tabei}
\address{Independent researcher, Japan}
\email{tabei@ryun.jp}
\date{\today}
\subjclass[2020]{Primary 16S34, 20C07; Secondary 20F60, 68R05}
\keywords{unique product property, Kaplansky conjectures, non-unique-product
set, Nielsen--Soelberg groups, constraint satisfaction}

\begin{document}
\begin{abstract}
Nielsen and Soelberg proved that a finite subset $A$ of a torsion-free group
with $A\cdot A$ having no unique product satisfies $|A|\ge 8$, and exhibited
two groups, here $\GOne$ and $\GTwo$, attaining the bound. Beyond existence,
nothing quantitative was known about these extremal configurations. We
construct exact, independently verified computational models of both groups
(coset--nilpotent coordinates extracted by modified Todd--Coxeter rewriting,
with a complete faithfulness certificate chain), reproduce the two extremal
$8$-sets together with their coincidence fingerprints, and compute the first
quantitative invariants at the global minimum. The landscapes differ sharply
from each other and from the Promislow group $\P$ and the Fibonacci group
$H_4$ studied in the companion paper. In $\GOne$, with the standard
two-generator presentation metric, no $8$-element symmetric witness is
contained in the radius-$6$ ball ($933$ elements, certified infeasible), while the
Nielsen--Soelberg witness lies in the radius-$7$ ball: the global minimum is
\emph{spread out}. In $\GTwo$, with the natural eight-generator metric, the
witness and its inverse are the \emph{only} two non-UP $8$-sets in the
radius-$1$ ball, and the unique-product staircase takes the value $0$ at
$n=8$ but $1$ at $n=9$: the first known occurrence of a set whose square has
exactly one uniquely represented element as a minimizer, showing that the
simultaneous failure of t.u.p.\ and u.p.\ observed in $\P$ and $H_4$ is not
universal, and that the extremal witness is isolated --- it cannot be grown
within the ball. In $\GOne$'s radius-$5$ ball the staircase is flat at $2$
for all $2\le n\le10$: even at the critical size the count cannot be beaten
inside the ball, so the drop to $0$ at radius $7$ is abrupt. We also record
that no $(|A|,|B|)=(7,9)$ two-sided witness exists in the searched balls,
leaving the sharpness of the Nielsen--Soelberg profile bound
$|A|=7\Rightarrow|B|\ge9$ open, and that $\P$ contains no pair satisfying
the relators of their universal group $\GThree$ within radius $5$. Finally
we treat $\GThree$ itself. Its structure is known --- Soelberg's thesis
identifies an index-$8$ Heisenberg subgroup of step $8$ and proves
torsion-freeness, and Gardam, studying the same group as an amalgam of
Klein bottle groups, shows it to be virtually nilpotent but not virtually
abelian, hence not isomorphic to $\P$ --- and what we add is a model in
search coordinates, with a faithfulness certificate chain, in which balls
can be enumerated. In it we reproduce the Nielsen--Soelberg two-sided pair
exactly and exhibit a \emph{symmetric} $15$-element witness whose
trivial-coset singleton is the generator of the centre of that Heisenberg
subgroup. The witness is rigid and rare: within $\Ball5$ the size $15$ is
exactly minimal, the coset profile is forced, and a solve-and-block
enumeration terminates with exactly four such witnesses in $\Ball4$, a
single orbit under a group of order $16$. Ball-limited minimality gives
$m_1(\GThree)\in[8,15]$ against $m_2(\GThree)=16$ exact. All
values are exact: witnesses are re-verified solver-free and non-existence
claims are solver infeasibility certificates.
\end{abstract}
\maketitle

\section{Introduction}\label{sec:intro}
A group $G$ has the \emph{unique product property} (UPP) if for all finite
nonempty $A,B\subseteq G$ some element of $A\cdot B$ is represented exactly
once as $ab$ with $a\in A$, $b\in B$; a single finite set $A$ is
\emph{non-UP} if $A\cdot A$ has no uniquely represented element. Unique
products are the classical combinatorial mechanism behind Kaplansky's
zero-divisor and unit problems for group rings of torsion-free groups
\cite{Kaplansky,Passman}, and the failure of the UPP in torsion-free groups
--- first exhibited by Rips--Segev \cite{RipsSegev} and made explicit by
Promislow \cite{Promislow} --- marks exactly the territory where those
problems remain open. Gardam's disproof of the unit conjecture
\cite{Gardam} runs through Promislow's group $\P$.

How small can a non-UP set be? Nielsen and Soelberg \cite{NS} answered this
completely at the level of \emph{all} torsion-free groups: if $A\cdot A$ has
no unique product then $|A|\ge 8$, and the bound is attained. Their proof of
attainment is a four-year computer search that produced two torsion-free
groups --- called $\GOne$ and $\GTwo$ here and below, following their
Section~3 --- each containing an explicit $8$-element set $A$ with $A\cdot A$
non-UP. These two groups are, to date, the only known witnesses to the
sharpness of the global bound.

The present note looks \emph{inside} these two groups. The existence
statement of \cite{NS} says nothing about how the extremal configurations
sit in their ambient groups: how far from the identity such a set must
reach, whether it is unique, whether it can be grown or perturbed, and what
the approach to the minimum looks like from below. These are exactly the
questions answered for the Promislow group $\P$ and the Fibonacci group
$H_4=F(3,4)$ (shown to fail the UPP by Dietrich--Lee--Nies--Vinyals
\cite{DLNV}) in the companion paper \cite{companion}, where the
corresponding minima ($14$ and $16$, over stated balls) display rigid
structure --- forced fiber distributions, exact counts, non-monotone
staircases. Carrying that quantitative program to $\GOne$ and $\GTwo$ is
natural for two reasons: these are the groups where the \emph{global}
minimum $8$ is attained, so at the bottom of the size range the ball-limited
caveats of \cite{companion} disappear; and the two groups are virtually
class-$2$ nilpotent, structurally unlike the virtually abelian $\P$, so the
comparison probes whether the phenomena of \cite{companion} are general or
group-specific.

The methodology is not new --- exact integer models plus CP-SAT constraint
search, as in \cite{Gardam} and \cite{companion} --- and we claim no novelty
for it. What is new is, first, the models themselves: $\GOne$ and $\GTwo$
are given in \cite{NS} by finite presentations, and to compute in them at
all we construct exact coset--nilpotent coordinate models (Section~%
\ref{sec:models}), extracted mechanically from the presentations by modified
Todd--Coxeter rewriting, with a faithfulness certificate chain that reduces
correctness to finitely many checked identities plus the torsion-freeness
argument of \cite{NS}, which we re-verify independently. As a first
dividend, the extremal $8$-sets of \cite{NS} are reproduced and re-verified
in a fully independent implementation, together with their coincidence
fingerprints (two equal-product classes of size $3$, twenty-nine of size
$2$).

Second, the invariants. Our main findings (Sections~\ref{sec:localization}%
--\ref{sec:g1}):

\begin{itemize}
\item \emph{Localization} (Theorem~\ref{thm:loc}): in $\GOne$, with the word
metric of its defining two-generator presentation, the balls of radius up to
$6$ ($933$ elements) contain \emph{no} $8$-element non-UP set --- each
non-existence a solver infeasibility certificate --- while the
Nielsen--Soelberg witness lies in the ball of radius $7$. The least ball
containing a global-minimum configuration is thus exactly $\Ball7$. The
contrast with $\P$ is stark: $\P$ realizes its (larger, size-$14$) minimal
witness already at radius $3$, in a ball of $41$ elements. The global
minimum of $\GOne$ is \emph{spread out}.
\item \emph{Census} (Proposition~\ref{prop:census2}): in $\GTwo$, whose
witness is its own generating set, the radius-$1$ ball contains exactly two
non-UP $8$-sets: the witness and its inverse. At the global minimum the
configuration is rigid.
\item \emph{Staircases} (Propositions~\ref{prop:stair2},
\ref{prop:stair1}): writing $u(n)$ for the least number of unique products
of an $n$-subset of a stated ball, $\GTwo$ gives
$u(8)=0$, $u(9)=1$, $u(10)=2$. The value $1$ --- a set whose square has
\emph{exactly one} uniquely represented element --- never occurs in the
studied balls of $\P$ and $H_4$ \cite{companion}, where the two-unique-%
products and unique-product properties fail simultaneously; $\GTwo$ shows
this simultaneity is not a law. The rebound $0\to1\to2$ also shows the
extremal witness is \emph{isolated}: no $9$-element superset (indeed no
$9$-set at all in the ball) is non-UP. In $\GOne$, by contrast, the
staircase is flat at $2$ throughout $2\le n\le 10$ in the radius-$5$ ball:
nothing gets close to failure before the spread-out witness appears.
\item \emph{A symmetric witness in the universal group}
(Theorem~\ref{thm:g3sym}): the universal group $\GThree$ of \cite[\S4]{NS}
is itself a non-UP group, and symmetrically so: a $15$-element set
$A\subset\Ball4\subset\GThree$ has $A\cdot A$ without unique products ---
one element in the trivial coset of the Heisenberg-type index-$8$ subgroup
$H$ (namely the generator of $Z(H)$) and two in each nontrivial coset.
Within $\Ball5$ the size $15$ is exactly minimal, so
$m_1(\GThree)\in[8,15]$ against $m_2(\GThree)=16$ exact, exhibiting the
largest asymmetry gap $\delta$ observed so far if $15$ is globally minimal.
Nielsen had expected a negative answer to the existence question (private
communication). The existence half is in fact automatic: as Gardam has
pointed out to us, Theorem~1 of Strojnowski \cite{Strojnowski} already
yields a finite symmetric witness in every non-u.p.\ group, so
$m_2(G)<\infty$ implies $m_1(G)<\infty$ in general; see
Remark~\ref{rem:symexists}. The quantitative content is
therefore the point: these minimal witnesses are rigid and few, their coset
profile and central anchor are forced, and a solve-and-block enumeration
terminates to give \emph{exactly four} of them in $\Ball4$, a single orbit
under the order-$16$ group $\langle D_4,\,\mathrm{inv}\rangle$.
\end{itemize}

Third, negative data worth recording (Section~\ref{sec:open}): no
$(7,9)$-pair exists in the searched balls, so the sharpness of the
Nielsen--Soelberg profile bound ($|A|=7\Rightarrow|B|\ge9$) remains open;
and an exhaustive scan shows the universal group $\GThree$ of \cite[\S4]{NS}
admits no relator-satisfying pair in $\P$ within radius $5$, closing (at
that depth) a tempting route to pinning the two-sided minimum of $\P$ at
$16$.

\emph{Scope and caveats.} Every non-existence statement below is about a
ball $\Ball{r}$ \emph{centred at the identity}, in a \emph{stated generating
set}; localization radii depend on the generating set, and symmetric non-UP
is not translation invariant \cite[Remark on translation]{companion}, so no
diameter statements are implied. All arithmetic is exact; every set asserted
non-UP is re-checked by a solver-free verifier, and every asserted
non-existence is a solver status of \texttt{INFEASIBLE}, never a timeout.
The two sides are not epistemically symmetric: positive claims are verified
independently of the solver, while non-existence claims rest on the
correctness of CP-SAT's infeasibility verdict (which, unlike a
DRAT-producing SAT solver, does not emit an externally checkable proof
object); we state this once here rather than decorate every proposition.
This asymmetry has since been closed for the main non-existence claims of
the paper: the localization infeasibilities of Theorem~\ref{thm:loc} (all
radii), the census counts of Proposition~\ref{prop:census2} (including
completeness of the enumeration), the completeness of the $\GThree$ radius-$4$
census (Section~\ref{sec:open}) and the minimality searches of
Section~\ref{sec:open} were all re-derived by DRAT-producing SAT solvers
with the unsatisfiability proofs machine-checked by \textsf{drat-trim};
the Section~\ref{sec:open} witness additionally carries certificates
checkable by free reduction alone. The staircase values and the two-sided
localization remain CP-SAT verdicts.
Values we could not certify (two staircase values of $\GOne$ at
$n=11,12$) are reported as intervals.

\section{The groups and their exact models}\label{sec:models}

\subsection{The groups \texorpdfstring{$\GOne$ and $\GTwo$}{G1 and G2}}
\label{subsec:groups}
Following \cite[\S3]{NS}, $\GOne$ is the group generated by $x,y$ subject to
the two relations
\[
yx^{-1}y^{-1}xy^{2}x^{-1}y^{-2}x^{-1}yxy^{-1}x = 1,\qquad
yx^{-1}yxy^{-1}x^{-1}yx^{-1}yx^{-1}y^{-1}x = 1 ,
\]
and $H\le\GOne$ is the subgroup generated by
\[
h_1=(xy)^2,\quad h_2=yx^2yx^{-1}y^{-2}x^{-1},\quad
h_3=xy^{-1}xyxy^{-2}x^{-1},\quad h_4=y^2x^{-1}yxy^{-1}x^{-2}.
\]
Nielsen--Soelberg show (by Todd--Coxeter and a rewriting system) that $H$ is
normal of index $32$ with presentation
\begin{equation}\label{eq:H1}
H=\langle h_1,h_2,h_3,h_4 : h_1,h_2\ \text{central},\ h_4h_3=h_2^{8}h_3h_4
\rangle,
\end{equation}
i.e.\ $H\cong\Z\times N$ with $N$ torsion-free class-$2$ nilpotent, and
deduce that $\GOne$ is torsion-free. Their witness is
\[
A_1=\{\,x,\; y,\; y^{-1}x^2,\; y^{-1}xy,\; xy^{-1}x,\; x^{-1}yxy^{-1}x,\;
x^{-1}yx,\; x^{-1}yx^{-1}y^{-1}xy^2\,\}.
\]

The group $\GTwo$ is generated by $a_1,\dots,a_8$ subject to the ten
relations $a_ia_j=a_ka_\ell$ for $(i,j,k,\ell)$ in the list $Y$ of
\cite[(3.5)]{NS}, with witness $A_2=\{a_1,\dots,a_8\}$ and distinguished
subgroup $H'=\langle a_1^2,\ a_3^2,\ a_6^2,\ a_1a_3a_6^{-1}\rangle$. We
verify (Todd--Coxeter, and modified Todd--Coxeter for the presentation)
that $H'$ is normal of index $4$ --- smaller than one might expect --- and
that on the given generators $x_1=a_1^2$, $x_2=a_3^2$, $x_3=a_6^2$,
$x_4=a_1a_3a_6^{-1}$ it has the presentation
\begin{equation}\label{eq:H2}
H'=\langle x_1,\dots,x_4 : x_1,x_2\ \text{central},\
[x_4,x_3]=x_1^{2}x_2^{-2}\rangle,
\end{equation}
again $\Z^2$-by-Heisenberg-type of Hirsch length $4$. To our knowledge
\eqref{eq:H2} has not appeared in print; it makes the structural kinship of
the two extremal groups explicit --- both are (finite)-by-(class-$2$
nilpotent of Hirsch length $4$), with commutator relation landing in the
central $\Z^2$ at different depths ($h_2^8$ against $x_1^2x_2^{-2}$).

\subsection{Coset--nilpotent coordinates}\label{subsec:coords}
Both models represent an element as a pair $(c,h)$: a coset index
$c\in\{1,\dots,n\}$ for the finite quotient ($n=32$ for $\GOne$, $n=4$ for
$\GTwo$), and an exponent vector $h\in\Z^4$ for the nilpotent normal
subgroup in the Mal'cev normal form of \eqref{eq:H1} resp.~\eqref{eq:H2},
with the class-$2$ collection law (for \eqref{eq:H1}:
$(a,b,c,d)(a',b',c',d')=(a{+}a',\,b{+}b'{+}8dc',\,c{+}c',\,d{+}d')$).
Fixing a Schreier transversal $t_1,\dots,t_n$, multiplication is
\[
(c,h)\cdot(c',h') \;=\; \bigl(m(c,c'),\; h\cdot\alpha_c(h')\cdot s(c,c')
\bigr),
\]
where $m$ is the coset multiplication table, $\alpha_c$ the conjugation
action of $t_c$ on the subgroup, and $s(c,c')=t_ct_{c'}t_{m(c,c')}^{-1}$ the
cocycle. The tables $m$, $s$, $\alpha$ (and inversion data) are
\emph{extracted}, not posited: each entry is a specific word known to lie in
the subgroup, rewritten over the subgroup generators by the modified
Todd--Coxeter machinery (GAP's augmented coset tables), and collected to an
exponent vector. Nothing about the models is heuristic; every table entry is
the exact value forced by the presentation.

\subsection{The faithfulness certificate}\label{subsec:faithful}
That the resulting finite data faithfully represents the infinite group
reduces to three independently checked links.
\begin{enumerate}
\item \emph{The subgroup presentations.} Write $\widetilde H$ for the
collection model of \eqref{eq:H1}: tuples $\Z^4$ under
$(a,b,c,d)(a',b',c',d')=(a{+}a',b{+}b'{+}8dc',c{+}c',d{+}d')$.
Associativity is the $2$-cocycle identity
$8d_uc_v+8(d_u{+}d_v)c_w=8d_vc_w+8d_u(c_v{+}c_w)$, torsion-freeness and the
uniqueness of the normal form $h_1^ah_2^bh_3^ch_4^d$ are immediate, and the
relations of \eqref{eq:H1} hold. Let $H_{\mathrm{Mtc}}$ be the group
presented by the modified Todd--Coxeter relators on the given generators
(a presentation of $H$). Two mechanical certificates close the
identification: all eleven Mtc relators evaluate to the identity in
$\widetilde H$, giving a surjection $H_{\mathrm{Mtc}}\twoheadrightarrow
\widetilde H$; and the relations of \eqref{eq:H1} are derivable from the
Mtc relators --- the five centralities occur verbatim among them, and a
relator whose skeleton, reduced modulo those centralities, is a commutator
in $x_3,x_4$ carries exactly the central exponent $h_2^{-8}$, from which
the relation $h_4h_3=h_2^8h_3h_4$ follows by inversion and conjugation ---
giving a surjection $F/\langle\!\langle\eqref{eq:H1}\rangle\!\rangle
\twoheadrightarrow H_{\mathrm{Mtc}}$. The composite of the two surjections
sends the normal forms of $F/\langle\!\langle\eqref{eq:H1}\rangle\!
\rangle$ to \emph{distinct} tuples of $\widetilde H$, so it is injective
and both surjections are isomorphisms:
$H\cong H_{\mathrm{Mtc}}\cong\widetilde H$. For $\GTwo$ the same
two-way certificate is run against the Tietze-simplified Mtc presentation
(six relators; the four given generators are preserved by the
simplification): all relators of both the raw and simplified presentations
die under the collection law of \eqref{eq:H2}, the five centralities occur
verbatim, and the skeleton $x_4x_3x_4^{-1}x_3^{-1}$ carries exactly
$x_1^{2}x_2^{-2}$.
\item \emph{Torsion-freeness.} We re-run the Nielsen--Soelberg finite
criterion in our own implementation for both groups: with $K=H^2[H,H]$
(index $512$ in $\GOne$; index $64$ in $\GTwo$), no product $t_1t_2$ of a
nontrivial $G/H$-transversal representative with an $H/K$-transversal
representative has $(t_1t_2)^2\in K$ ($31\times16$ resp.\ $3\times16$
traced checks, all negative), which together with torsion-freeness of the
subgroups rules out torsion.
\item \emph{The model.} The multiplication tables are not posited but
extracted from the group itself: along the bijection $g\mapsto(c,h)$ with
$g=h\,t_c$ (well-defined by link 1), the product formula of
Section~\ref{subsec:coords} is the transcription of associativity in the
group, each table entry being the exact rewriting of its defining word. The
model is therefore the coordinatization of the presented group by
construction, and the remaining checks certify the \emph{implementation}:
the defining relators evaluate to the identity, the subgroup generators land
at the correct coordinates, group-axiom and torsion fuzzing pass, and the
arithmetic agrees with independent exact rewriting on $150$ random words
per group.
\end{enumerate}
As a final, sharp fingerprint: in both models the Nielsen--Soelberg
$8$-sets verify as non-UP with coincidence class multiset $\{3^2,2^{29}\}$
--- exactly the ``two classes of size three, the rest of size two''
described in \cite[\S4]{NS} --- and in $\GTwo$ the signature
$a_1^2=a_2^2$ of \cite[\S3]{NS} holds, while no two distinct elements of
$A_1$ have equal squares, matching their proof that the examples are
inequivalent.

\section{Localization: where the minimum lives}\label{sec:localization}
Write $\Ball{r}$ for the ball of radius $r$ in $\GOne$ with respect to
$\{x^{\pm1},y^{\pm1}\}$; the ball sizes are
$|\Ball1|,\dots,|\Ball7| = 5,\,17,\,53,\,153,\,401,\,933,\,1935$.
The \emph{geodesic} word lengths of the elements of $A_1$, computed by
breadth-first search in the exact model (not read off the written words),
are $(1,1,3,3,3,5,3,7)$; so $A_1\subseteq\Ball7$ and not
$\subseteq\Ball6$. (That $A_1\not\subseteq\Ball6$ also follows, without
any length computation, from Theorem~\ref{thm:loc} itself: $A_1$ is non-UP
and $\Ball6$ contains no non-UP $8$-set.)

\begin{theorem}[localization in $\GOne$]\label{thm:loc}
For every $r\le 6$ the ball $\Ball{r}\subset\GOne$ contains no $8$-element
set $A$ with $A\cdot A$ non-UP; the ball $\Ball7$ contains the
Nielsen--Soelberg witness $A_1$. Hence the least radius whose ball contains
a global-minimum non-UP set is exactly $7$.
\end{theorem}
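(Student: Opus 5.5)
The statement splits into a positive half (the witness $A_1$ lies in $\Ball7$) and a negative half (no non-UP $8$-set in $\Ball{r}$ for $r\le6$). Since $\Ball{r}\subseteq\Ball6$ for $r\le6$, the negative half reduces to the single case $r=6$.

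For the positive half I would use the exact model of Section~\ref{sec:models}. Breadth-first search from the identity over $\{x^{\pm1},y^{\pm1}\}$ computes geodesic lengths, which are $(1,1,3,3,3,5,3,7)$, so $A_1\subseteq\Ball7$. The non-UP property of $A_1$ is then re-checked solver-free. One forms all $64$ products $ab$ in coordinates $(c,h)$ and groups them by value. Every class has size at least $2$, with class multiset $\{3^2,2^{29}\}$. This half is purely arithmetic. Its correctness rests on the faithfulness chain of Section~\ref{subsec:faithful}: equal coordinates must mean equal group elements, and no spurious coincidences can arise.

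For the negative half I would encode the existence of a non-UP $8$-subset of $\Ball6$ (933 elements) as a Boolean constraint problem.
\begin{enumerate}
\item Introduce one indicator $z_g$ for each $g\in\Ball6$, and require $\sum z_g=8$.
\item Precompute all products $gh$ with $g,h\in\Ball6$ exactly in the model, and bucket the ordered pairs $(g,h)$ by their product value $p$.
\item For each pair introduce the conjunction $w_{g,h}=z_g\wedge z_h$. For each product $p$, impose that if some $w_{g,h}$ with $gh=p$ holds, then at least two such $w$ hold. Equivalently, for each pair, $w_{g,h}\Rightarrow\bigvee_{(g',h')\neq(g,h),\,g'h'=p}w_{g',h'}$.
\item Products $p$ represented by only one pair $(g,h)$ in the ball force $\neg(z_g\wedge z_h)$ directly. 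This prunes many variables.
\end{enumerate}

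Symmetry breaking is the next step, but it must be used carefully. Left translation does not preserve symmetric non-UP, and it does not preserve balls. What is safe is the following:
\begin{itemize}
\item Inversion $A\mapsto A^{-1}$ preserves the ball, and $A$ is non-UP exactly when $A^{-1}$ is, since $(A\cdot A)^{-1}=A^{-1}A^{-1}$ with the same multiplicities.
\item Any automorphism of $\GOne$ permuting $\{x^{\pm1},y^{\pm1}\}$ also preserves the ball and the non-UP property.
\end{itemize}
I would add lexicographic-leader constraints only for symmetries verified in the model. The solver must then return \texttt{INFEASIBLE}.

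Independently, I would translate the same CNF to a DRAT-producing SAT solver and check the proof with \textsf{drat-trim}. The cardinality constraint needs a sequential-counter encoding for this.

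I expect the main obstacle to be the size of the radius-$6$ instance. About $933^2\approx 8.7\times10^5$ pair variables with product-bucket clauses is large, and proving infeasibility of a cardinality-$8$ choice may be slow. To cope with this, I would first try aggressive preprocessing:
\begin{itemize}
\item Iteratively delete any element $g$ that cannot participate. For example, if every product involving $g$ within any $8$-set is forced unique, then $z_g$ is false. Repeating this shrinks the ball.
\item Split into cases on whether $\id\in A$, and further split by coset distribution over $G_1/H$ (32 cosets). A non-UP set must satisfy parity-like balancing constraints on the coset multiplication table, and solving each case separately keeps the instances tractable.
\end{itemize}
Finally, the trivial inclusion $\Ball{r}\subseteq\Ball6$ gives every $r\le 6$, and together with the first half this yields the exact radius $7$.
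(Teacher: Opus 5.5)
Your proposal is correct and follows the paper's proof. The positive half is the same: breadth-first geodesic lengths $(1,1,3,3,3,5,3,7)$ put $A_1$ in $\Ball7$, and the model gives a solver-free non-UP check. The negative half is the same: a CP-SAT \texttt{INFEASIBLE} certificate, re-derived by a DRAT-producing solver and checked with \textsf{drat-trim}, together with the Nielsen--Soelberg bound $|A|\ge 8$ that identifies $8$ as the global minimum. You should state that bound explicitly, since your final sentence uses it implicitly. Reducing to the single $r=6$ instance by nesting of balls is a valid streamlining of the paper's radius-by-radius runs. The symmetry breaking and case splitting are optional speed-ups and not needed for correctness. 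Keep the coset-distribution split as an exhaustive case split; the unspecified ``parity-like balancing constraints'' cannot be relied on without proof.
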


\begin{proof}
Since $|A|\ge8$ for any non-UP set in any torsion-free group \cite{NS}, only
$n=8$ requires a search. For $r\le1$ there is nothing to search:
$|\Ball0|=1$ and $|\Ball1|=5$ admit no $8$-element subset at all. For
$r=2,\dots,6$ the CP-SAT model over
$\Ball{r}$ returns \texttt{INFEASIBLE} (at $r=5$, $401$ elements, in $12$
seconds; at $r=6$, $933$ elements, in $1202$ seconds); these are
unsatisfiability certificates, not timeouts. Each of the five
infeasibilities was subsequently re-derived by a DRAT-producing SAT solver
and the proof machine-checked with \textsf{drat-trim}, removing the
solver-trust caveat of Section~\ref{sec:intro} for this theorem. (The
$\Ball3$ row is small enough for a stronger check still: it was re-derived
at the \emph{constraint} layer with the Glasgow Constraint Solver, whose
pseudo-Boolean proof was verified by \textsf{VeriPB}, so there not even the
encoding into clauses is trusted.) Membership of $A_1$ in
$\Ball7$ is the word-length computation above, and $A_1$ is non-UP by the
verified reproduction of Section~\ref{subsec:faithful}.
\end{proof}

Three remarks. First, the contrast: $\P$ realizes its minimal witness
(size $14$) at radius $3$, in a ball of $41$ elements; $H_4$ at radius $3$
($119$ elements); $\GTwo$, in its natural generator metric, at radius $1$.
In $\GOne$ the \emph{smaller} configuration --- the global minimum itself
--- requires a ball of nearly two thousand elements. Second, the statement
concerns balls centred at the identity: symmetric non-UP is not translation
invariant \cite{companion}, so Theorem~\ref{thm:loc} does not bound the
diameter of a hypothetical second witness elsewhere in $\GOne$. Third, the
radius is generating-set dependent; $\{x,y\}$ is the generating set of the
defining presentation \cite[(3.1)]{NS}, and $\GOne=G_X$ is also generated by
$A_1$ itself, in which metric the witness trivially sits at radius $1$ ---
the honest invariant content is the pair (presentation metric, radius).

The spreading is not an artifact of the symmetric normalization issue. For
\emph{pairs}, bi-translation $(A,B)\mapsto(a^{-1}A,\,Bb^{-1})$ preserves the
non-UP property exactly, so anchoring $\id\in A\cap B$ is a legitimate
normalization and the following quantity is genuinely translation
invariant: the least $r$ such that $\Ball{r}$ contains a normalized
two-sided witness with $|A|+|B|=16$, the global two-sided minimum.

\begin{proposition}[two-sided localization in $\GOne$]\label{prop:tsloc}
For every $r\le5$ the model with $\id\in A\cap B$, $|A|+|B|=16$ and
$A,B\subseteq\Ball{r}\subset\GOne$ is \texttt{INFEASIBLE} (at $r=5$, $401$
elements, a $79$-minute certificate), while the normalized pair
$(x^{-1}A_1,\,A_1x^{-1})$ realizes total $16$ inside $\Ball8$. Hence the
two-sided localization radius of $\GOne$ lies in $\{6,7,8\}$.
\end{proposition}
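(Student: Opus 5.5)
The proposition has two halves, and I would treat them separately, mirroring the proof of Theorem~\ref{thm:loc}. The positive half is a direct computation in the exact model of Section~\ref{subsec:coords}. First, the pair $(x^{-1}A_1,\,A_1x^{-1})$ is non-UP, because bi-translation preserves the multiset of representation counts. Indeed, $(a^{-1}a')(b'b^{-1})=a^{-1}(a'b')b^{-1}$, so the coincidence classes of $A_1\cdot A_1$ are carried bijectively onto those of $x^{-1}A_1\cdot A_1x^{-1}$. The fingerprint $\{3^2,2^{29}\}$ therefore transfers, and in particular no class has size $1$. Second, since $x\in A_1$, we have $\id\in x^{-1}A_1\cap A_1x^{-1}$, and $|A|+|B|=8+8=16$. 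Third, the radius bound: each element has the form $x^{-1}a$ or $ax^{-1}$ with $a\in A_1$ of geodesic length at most $7$, so every element lies in $\Ball8$. I would nonetheless recompute the geodesic lengths of all sixteen elements by breadth-first search in the model, so that membership is a certified fact and not just the triangle-inequality bound.

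For the negative half, I would first reduce the search to a finite list of profiles. By \cite{NS}, the two-sided bound says any non-UP pair has total size at least $16$, and there is the profile constraint $|A|=7\Rightarrow|B|\ge9$ along with its small-size analogues. So the finitely many splits $(|A|,|B|)$ with sum $16$ can be enumerated, and those excluded by the results of \cite{NS} can be discarded. Since inversion maps the pair $(A,B)$ to $(B^{-1},A^{-1})$ and preserves both $\Ball r$ and the normalization, I may assume $|A|\le|B|$. Each remaining split is then encoded as a CP-SAT model over $\Ball r$ with the following ingredients:
\begin{itemize}
\item Boolean membership variables for $A$ and $B$, with the literal $\id$ forced into both.
\item Cardinality constraints fixing $|A|$ and $|B|$.
\item Product indicator variables $p_{a,b}=[a\in A]\wedge[b\in B]$.
\item For every $g\in\Ball r\cdot\Ball r$, the constraint that $\sum_{ab=g}p_{a,b}$ is either $0$ or at least $2$, implemented with an auxiliary ``$g$ is hit'' literal.
\end{itemize}
The multiplication table on $\Ball r$ comes from the model. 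For $r\le4$ the instances are small, and for $r\le2$ the ball has only $17$ elements.

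The main obstacle is $r=5$. There the ball has $401$ elements, and the two-sided encoding has roughly $401^2$ product literals, twice as many free set variables as the symmetric case, and several cardinality profiles. Solve time is the real issue, and a timeout would not count as a certificate. To keep the instance tractable, I would add symmetry breaking by the automorphisms of $\Ball5$ that fix $\id$ and preserve the generating set $\{x^{\pm1},y^{\pm1}\}$. I would also fold the profiles into one model with $|A|+|B|=16$, rather than solving them separately, and give a single long run to reach \texttt{INFEASIBLE}. Smaller radii follow either because $\Ball r\subseteq\Ball5$, so infeasibility at $r=5$ implies it below, or from direct runs. Combining the infeasibility for $r\le5$ with the witness in $\Ball8$ gives the least two-sided radius in $\{6,7,8\}$.
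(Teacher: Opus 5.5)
Your proposal is correct and follows the paper's route. The negative half is the single CP-SAT model over $\Ball{r}$ with $\id\in A\cap B$ and $|A|+|B|=16$ returning \texttt{INFEASIBLE}; the paper reports this only as a CP-SAT verdict, not a DRAT-checked one. The positive half is the bi-translate of $A_1$ by $x\in A_1$, which lies in $\Ball8$ by the triangle inequality from the geodesic lengths $(1,1,3,3,3,5,3,7)$. Your monotonicity remark is right: the $r=5$ certificate alone covers every $r\le5$. The extra pruning you add (the inversion swap $(A,B)\mapsto(B^{-1},A^{-1})$, the NS profile bounds, automorphism symmetry breaking) is optional. Symmetry breaking is sound only for genuine group automorphisms of $\GOne$ that permute $\{x^{\pm1},y^{\pm1}\}$, and the paper exhibits none, so that step may be vacuous.
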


Even by the translation-invariant measure, the global minimum of $\GOne$
sits far from the identity: the phenomenon of Theorem~\ref{thm:loc} is
geometric, not notational.

\section{Census and staircase in \texorpdfstring{$\GTwo$}{G2}}\label{sec:g2}

\begin{proposition}[census at the global minimum]\label{prop:census2}
In $\Ball1\subset\GTwo$ ($17$ elements: the eight generators, their
inverses, and $\id$) there are exactly two $8$-element sets $A$ with
$A\cdot A$ non-UP: the Nielsen--Soelberg witness $A_2$ and its inverse
$A_2^{-1}$ (which is distinct from $A_2$). The census is unchanged in
$\Ball2$ ($133$ elements): still exactly $\{A_2,\,A_2^{-1}\}$. The
$\Ball1$ count is established with nothing to trust: all
$\binom{17}{8}=24310$ eight-subsets were enumerated directly and tested
for non-UP by definition, with no solver and no encoding in the loop. It
was also obtained twice by solver (a complete CP-SAT enumeration, and a
SAT enumeration with blocking clauses whose terminal infeasibility --- the
completeness of the census --- carries a \textsf{drat-trim}-checked DRAT
proof); the $\Ball2$ count is a solver enumeration. All sets are verified
solver-free.
\end{proposition}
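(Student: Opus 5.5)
The plan is to work entirely in the exact coset--nilpotent model of $\GTwo$ from Section~\ref{subsec:coords}, where $\Ball1$ is the $17$ elements $\id,a_i^{\pm1}$, and to settle the $\Ball1$ census by brute force. First we confirm that these $17$ model elements are pairwise distinct, so $|\Ball1|=17$. This uses faithfulness (Section~\ref{subsec:faithful}): distinct coordinates mean distinct group elements. Then we enumerate all $\binom{17}{8}=24310$ subsets $A$. For each we form the $64$ ordered products $ab$ ($a,b\in A$) as exact coordinate pairs $(c,h)$, bucket them by equality, and declare $A$ non-UP iff no bucket has size $1$. This is the definition verbatim and needs no encoding. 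The expected output is exactly two sets.

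Next we identify the two survivors. $A_2=\{a_1,\dots,a_8\}$ is non-UP by the reproduction in Section~\ref{subsec:faithful}, with fingerprint $\{3^2,2^{29}\}$. The map $A\mapsto A^{-1}$ sends non-UP sets to non-UP sets, since $(ab)^{-1}=b^{-1}a^{-1}$ gives a bijection between representations in $A\cdot A$ and in $A^{-1}\cdot A^{-1}$ preserving multiplicities. It also preserves $\Ball1$, so $A_2^{-1}$ is non-UP. It is distinct from $A_2$ because no $a_i^{-1}$ equals any $a_j$. Otherwise $a_ia_j=\id$ would hold, and by torsion-freeness and a coset check in $G/H'$ this can be ruled out directly in the model. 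So the census is exactly $\{A_2,A_2^{-1}\}$. As cross-checks we rerun the count by CP-SAT enumeration and by SAT with blocking clauses. For the latter, the final \texttt{UNSAT} (no third set) comes with a DRAT proof that we check with \textsf{drat-trim}.

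For $\Ball2$ ($133$ elements) brute force is infeasible ($\binom{133}{8}\approx 10^{12}$ subsets, each with $64$ products), so here we use the solver. The encoding has Boolean selection variables $s_g$ for $g\in\Ball2$ with $\sum s_g=8$. For every element $p\in\Ball2\cdot\Ball2$ the constraint states that if some pair with product $p$ is selected, then at least two pairs with product $p$ are selected. Pair-selection variables $z_{g,g'}=s_g\wedge s_{g'}$ are grouped by the exact product, which is precomputed in the model. We enumerate solutions with blocking clauses until the solver returns \texttt{INFEASIBLE}, and we also break the obvious symmetry $A\mapsto A^{-1}$ to speed this up. Every returned set is then re-verified solver-free as above.

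The main obstacle is the $\Ball2$ step. The product table has about $133^2$ entries, and completeness rests on the solver's terminal infeasibility verdict rather than on an exhaustive check. If plain CP-SAT stalls, the first thing to try is adding the necessary condition that every $a\in A$ lies in a product class of size $\ge2$ at its extreme points. The cheapest version is the constraint for the products $ab$ with $a=b$, the squares. The $\Ball1$ claim, by contrast, is fully elementary once the model's faithfulness is granted.
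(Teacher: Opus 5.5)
Your proposal is correct and follows the paper's argument: a solver-free exhaustive test of all $\binom{17}{8}$ subsets of $\Ball1$ in the faithful model, cross-checked by CP-SAT and by blocking-clause SAT enumeration whose terminal infeasibility is \textsf{drat-trim}-checked, and a solver enumeration for $\Ball2$ with every returned set re-verified solver-free. One small simplification: $A_2^{-1}\ne A_2$ already follows from your first step, since the $17$ elements $\id,a_i^{\pm1}$ are pairwise distinct and so $A_2\cap A_2^{-1}=\emptyset$; the torsion-freeness/coset detour is unnecessary, and on its own it would not exclude $a_j=a_i^{-1}$ for $i\ne j$ when $a_ia_j$ lies in $H'$.
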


\begin{proposition}[staircase; first occurrence of the value $1$]
\label{prop:stair2}
For $2\le n\le 10$, the least number of unique products of an $n$-element
subset of $\Ball2\subset\GTwo$ ($133$ elements) is
\[
2,\;2,\;2,\;2,\;2,\;2,\;0,\;1,\;2\qquad(n=2,\dots,10),
\]
each value an \texttt{OPTIMAL} certificate with verified minimizer.
\end{proposition}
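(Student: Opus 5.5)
The plan is to certify each of the nine values in two halves: an upper bound from an explicit minimizer and a lower bound from an optimality certificate over $\Ball2$, working throughout in the exact model of $\GTwo$ from Section~\ref{subsec:coords}.

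First, I enumerate $\Ball2$ by breadth-first search from $\id$ over $\{a_i^{\pm1}\}$. Equality is decided on $(c,h)$ coordinates, which is sound by the faithfulness chain of Section~\ref{subsec:faithful}. This should give the $133$ elements.

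Second, I precompute the full multiplication table on $\Ball2\times\Ball2$. From it I collect the set $S$ of all products $g$ that occur, and for each $g\in S$ the list $R(g)$ of ordered pairs $(a,b)$ with $ab=g$.

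The optimization model is then as follows. There is a Boolean $z_a$ for each $a\in\Ball2$, with $\sum z_a=n$. For each ordered pair there is $p_{ab}=z_a\wedge z_b$. For each $g\in S$, the integer $k_g=\sum_{(a,b)\in R(g)}p_{ab}$ counts representations. A Boolean $u_g$ is forced to $1$ exactly when $k_g=1$, and the objective minimizes $\sum_g u_g$.

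Since only the objective's lower bound needs soundness, it suffices to impose $u_g\ge 1-|k_g-1|$ in a linearized form. Concretely, I add $u_g\ge p_{ab}-\sum_{(a',b')\in R(g)\setminus\{(a,b)\}}p_{a'b'}$ for each pair. Any feasible set then has objective at least its true unique-product count, and the minimizer's objective equals it.

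The symmetry $A\mapsto A^{-1}$ preserves the count, since $(ab)^{-1}=b^{-1}a^{-1}$ and $\Ball2$ is symmetric. It can be used for symmetry breaking, but I would avoid it so that the certificates stay simple. Each $n=2,\dots,10$ is solved to \texttt{OPTIMAL}, and the returned minimizer is re-checked by a solver-free routine that computes $A\cdot A$ and its representation multiplicities directly.

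Several values need no solver for their lower bound. For $n\le7$ the value $2$ is at least the general lower bound: in a torsion-free group every finite nonempty $A$ of size at most $7$ has a unique product by \cite{NS}. For $2\le n\le7$ the value is in fact at least $2$, by the standard fact that u.p.\ failure and t.u.p.\ failure coincide in the sense that Strojnowski's argument gives two unique products whenever one exists for $A\cdot A$ with $|A|\ge2$. If that does not apply cleanly, I fall back on the solver bound. Either way, the upper bound $2$ is realized by any $\{\id,a_1^k\}$-like chain inside a cyclic subgroup.

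For $n=8$, the value $0$ is witnessed by $A_2$, which lies in $\Ball1\subseteq\Ball2$ and is non-UP by Section~\ref{subsec:faithful}.

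The substantive claims are $n=9$ and $n=10$. For $n=9$ I must show that the value $0$ is infeasible and exhibit a $9$-set with exactly one unique product. For $n=10$ I must show that objective $\le1$ is infeasible.

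I expect the main obstacle to be proving the lower bounds at $n=9,10$, which carry the paper's new phenomenon. The search space is $\binom{133}{10}$, and the pair variables number about $133^2$. First I would strengthen the model with the redundant constraints $\sum_g k_g=n^2$ and $k_g\ge 2(1-u_g)z_\ast$, and give the solver the minimizer as a hint. If a direct proof of infeasibility at objective $\le0$ for $n=9$ is slow, I would split the search by the coset profile of $A$ in $\GTwo/H'$ (four cosets), solving each branch separately.
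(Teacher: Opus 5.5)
Your plan is the paper's own. All nine values are CP-SAT \texttt{OPTIMAL} certificates over the $133$-element ball $\Ball2$, and each minimizer is re-checked solver-free. Your linearization $u_g\ge p_{ab}-\sum_{R(g)\setminus\{(a,b)\}}p_{a'b'}$ is exact at the optimum, so that route is sound. Two side claims, however, are wrong and should be removed rather than offered as a first option.

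\emph{The shortcut for $2\le n\le 7$ fails.} Strojnowski's equivalence of u.p.\ and t.u.p.\ is a statement about groups. His device turns a pair with exactly one unique product into a larger pair with none, which is a contradiction only in a u.p.\ group. $\GTwo$ already fails u.p., so nothing follows for individual sets. The set-level version you state (one unique product in $A\cdot A$ with $|A|\ge2$ forces a second) is in fact refuted by the proposition you are proving, since $u(9)=1$. The Nielsen--Soelberg bound gives only $u(n)\ge1$ for $n\le7$. The value $2$ there must come from the solver, exactly as in the paper.

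\emph{The cyclic-chain upper bound only reaches $n\le5$.} The relators $a_ia_j=a_ka_\ell$ are homogeneous, so $a_i\mapsto1$ defines a homomorphism $\GTwo\to\Z$ that bounds word length from below. Hence $\langle a_1\rangle\cap\Ball2=\{a_1^{-2},\dots,a_1^{2}\}$, and a chain of powers of a generator fits in the ball only for $n\le5$. For $n=6,7$ the upper bound also rests on the verified solver minimizers, which your plan does produce.
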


Two consequences. First, at $n=9$ there is a $9$-element set whose square
has \emph{exactly one} uniquely represented element. In the studied balls of
$\P$ and $H_4$ the value $1$ is never attained \cite{companion}: there, the
two-unique-products property (t.u.p.\ in Strojnowski's sense
\cite{Strojnowski}) and the unique-product property fail simultaneously, at
the same critical size. $\GTwo$ shows this simultaneity is a feature of
those groups, not a general law --- the quantitative refinement of
Strojnowski's equivalence (u.p.\ $\Leftrightarrow$ t.u.p.\ as group
properties) genuinely depends on the group. The $u(9)=1$ minimizer is,
moreover, \emph{not} a superset of $A_2$. Second, the staircase rebounds
above the minimum ($0$ at $8$, then $1$, then $2$): within $\Ball2$ the
extremal witness cannot be extended by even one element. Together with
Proposition~\ref{prop:census2}, the global minimum of $\GTwo$ is an isolated
configuration --- a point at the bottom of nothing resembling a basin.

The two-sided profile of $\GTwo$ in $\Ball2$ is equally rigid. Write
$\beta(m)$ for the least $|B|$ such that some pair with $|A|=m$ and
$A,B\subseteq\Ball2$ has $A\cdot B$ non-UP (the profile of
\cite{companion}). For every
$m\le7$, the model with $|A|=m$ and $B$ ranging freely over all $133$ ball
elements is \texttt{INFEASIBLE}, while $\beta(8)=8$ is an \texttt{OPTIMAL}
certificate --- no lopsided two-sided witness exists at all in the ball, and
the profile is a cliff at the perfectly balanced $(8,8)$. Since
$|A|+|B|\ge16$ always and $(8,8)$ is realized (by $A_2$ twice), the
two-sided minimum $m_2(\GTwo)=16$ is exact; whether the lopsided split
$(7,9)$, permitted by the universal profile bounds of \cite[Thm.~1.4]{NS},
occurs in \emph{any} torsion-free group remains open.

\section{The landscape of \texorpdfstring{$\GOne$}{G1}}\label{sec:g1}
\begin{proposition}[staircase of $\GOne$ below the localization radius]
\label{prop:stair1}
For $2\le n\le 10$, the least number of unique products of an $n$-element
subset of $\Ball5\subset\GOne$ ($401$ elements) is exactly $2$, each value an
\texttt{OPTIMAL} certificate with verified minimizer. (For $n=11,12$ the
solver returned minimizers with $15$ unique products against a proved lower
bound of $2$ within budget; we leave those values open.)
\end{proposition}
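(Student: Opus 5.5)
The value $2$ is certified from two sides. The upper bound comes from one explicit minimizer for each $n$. The lower bound is a solver optimality proof, which can be made easier by first proving a general fact.

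\emph{Upper bound.} For each $2\le n\le 10$ I will exhibit an explicit $n$-subset $A_n\subseteq\Ball5$ and compute $A_n\cdot A_n$ in the exact model of Section~\ref{subsec:coords}. A solver-free verifier will bucket the $n^2$ products by equality and count the singleton buckets; the claim is that exactly two buckets are singletons. A natural family to try first is a progression $A_n=\{\id,g,\dots,g^{n-1}\}$ for a short generator $g$, or an arc along a geodesic. Its square has exactly two unique products (the extreme terms), so the upper bound $u(n)\le2$ is cheap. I only need to check that the chosen elements stay inside $\Ball5$, which is the geodesic-length BFS already used for $A_1$. For $n\le 10$, powers of $x$ of length at most $5$ do not suffice, so I would mix $x$- and $y$-directions and simply take the solver's returned minimizer.

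\emph{Lower bound.} This is the real content. Values $0$ and $1$ must be excluded for every $n\le10$. Value $0$ is already excluded for $n\le7$ by the Nielsen--Soelberg bound \cite{NS}, and for $n=8$ by Theorem~\ref{thm:loc}, since $\Ball5\subseteq\Ball6$. For $n=9,10$ it needs a fresh CP-SAT infeasibility. Value $1$ is the delicate case: Proposition~\ref{prop:stair2} shows it can occur in torsion-free groups, so no general theorem rules it out. I would therefore set up the CP-SAT model as an optimization. Boolean variables $z_g$ mark membership in $A$ for $g\in\Ball5$. For each product $p$ there is an integer count $\sum_{ab=p} z_az_b$ (linearized), together with an indicator that this count equals $1$. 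The objective is to minimize the number of such indicators, subject to $\sum z_g=n$. An \texttt{OPTIMAL} status with objective $2$ is the certificate.

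To reduce load before running the solver, I would impose symmetry breaking. Left translation does not preserve the symmetric property, but $A\mapsto A^{-1}$ does, as do ball automorphisms induced by the presentation's symmetries if any exist. I would also add the redundant constraint that the count of unique products is at least $1$; this holds in torsion-free groups only if $\GOne$ has the unique product property, which it lacks, so I would omit it.

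\emph{Main obstacle.} The hard step is the solver proving the bound $\ge2$ at $n=9,10$ on $401$ elements, since there are about $160{,}000$ product terms. I would first try a constraint asking for at most one unique product and seek a plain \texttt{INFEASIBLE} verdict, which is easier than an optimality proof. I would run one such feasibility instance per $n$ and, as in Theorem~\ref{thm:loc}, add a DRAT re-check if time allows. The failures at $n=11,12$ indicate exactly where this step stops being tractable.
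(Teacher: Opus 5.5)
This is essentially the paper's argument: for each $n$ a CP-SAT model over $\Ball5$ certifies that $2$ is optimal, and the minimizer is re-checked solver-free. Your split into an explicit $n$-set with two unique products plus an \texttt{INFEASIBLE} instance at threshold $\le 1$ is the same certificate in two pieces. One correction makes your upper bound better than you think: the progression need not start at $\id$, because $\{x^{k}: a\le k\le a+n-1\}=x^{a}\{\id,x,\dots,x^{n-1}\}$ is a translate by a power of $x$ commuting with it, so its square still has exactly the two extreme unique products. Choosing these powers among $x^{-5},\dots,x^{5}\in\Ball5$ therefore gives a solver-free upper bound for every $n\le 11$. This also shows your reading of $n=11,12$ is inverted: there the lower bound $2$ was proved, and it was the solver's incumbent search, stuck at $15$, that failed, so combined with that bound the progression pins $u(11)=2$.
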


Two readings. First, the approach to the global minimum is completely flat:
even at the critical size $n=8$, nothing inside $\Ball5$ gets below two
unique products, although a set with \emph{zero} exists in $\Ball7$
(Theorem~\ref{thm:loc}). The drop $2\to0$ is abrupt and happens only once
the ball is large enough to hold the spread-out witness. Second, the value
$1$ does not occur in this range for $\GOne$ --- as in $\P$ and $H_4$ ---
which makes its occurrence in $\GTwo$ (Proposition~\ref{prop:stair2}) the
more striking: among the four groups studied, only $\GTwo$ realizes a
minimizer with exactly one unique product. (Whether the value $1$ occurs
for $\GOne$ at $n=8$ in $\Ball6$ --- where $0$ is certified impossible ---
we could not decide within a six-hour budget; $u_{\Ball6}(8)\in\{1,2\}$
remains open.)

\begin{table}[ht]
\centering
\footnotesize
\setlength{\tabcolsep}{4pt}
\begin{tabular}{lccccc}
\toprule
 & $\P$ & $H_4$ & $\GOne$ & $\GTwo$ & $\GThree$ \\
\midrule
least symmetric non-UP size & $14^{(\ast)}$ & $16^{(\ast)}$ &
  $8$ & $8$ & $15^{(\ast)}$ \\
localization radius & $3$ & $3$ & $7$ & $1$ & $4$ \\
minimal witnesses in that ball & $16$ & $16$ & ? & $2$ & $4$ \\
staircase shape & bump at $10,11$ & flat $2$ & flat $2$ &
  $0,1,2$ & rising$^{(\dagger)}$ \\
value $1$ attained & no & no & no & \textbf{yes}$_{n=9}$ & no \\
\bottomrule
\end{tabular}

\smallskip
{\small $(\ast)$: least over the stated balls; global values open
\cite{companion}. $(\dagger)$: in $\Ball3$, which contains no witness; the
witness ball $\Ball4$ staircase is uncomputed.}
\caption{The quantitative landscape at the minimum for the five groups.}
\label{tab:compare}
\end{table}

\section{Negative data and open questions}\label{sec:open}
\subsection*{Sharpness of the profile bounds}
The bounds of \cite[Thm.~1.4]{NS} ($|A|=7\Rightarrow|B|\ge9$, etc.) are
proved by exhaustion but not shown sharp. The searched balls here and in
\cite{companion} contain no $(7,9)$ pair ($\GTwo$'s $\Ball2$; in $\P$,
$\Ball3$ excludes all totals below $24$ and $\Ball4$ the totals
$16$--$19$).
Exhibiting a $(7,9)$ pair in some torsion-free group, or excluding it, is
in our view the most natural next question at the bottom of the two-sided
range. Nielsen informs us that the searches of \cite{NS} stopped at this
frontier because going farther was computationally infeasible, and that the
bound is unlikely to be tight (private communication).

\subsection*{The universal group \texorpdfstring{$\GThree$}{G3} and
\texorpdfstring{$\P$}{P}}
Nielsen--Soelberg's universal example $\GThree=\langle x,y\mid
(yx)^2(xy)^2,\ (xy^{-1})^2(xy)^2\rangle$ \cite[\S4]{NS} carries an
$(8,8)$ two-sided pair with $A\ne B$ and $1\in A\cap B$. Its abelianization
is $\Z_4\times\Z_4$ --- the same as $\P$'s --- which raises the question
whether $\GThree\cong\P$; a positive answer would transport the pair into
$\P$ and pin the two-sided minimum $m_2(\P)$ at $16$, resolving the
interval $[16,24]$ of \cite{companion}. We record a negative result at
search depth: an exhaustive scan of $\Ball5\times\Ball5$ in $\P$ ($147^2$
pairs) finds \emph{no} nontrivial pair satisfying both relators of
$\GThree$, so no isomorphism (indeed no homomorphism nontrivial on the
generators) maps $x,y$ to elements of length $\le5$. This rules out an
isomorphism only at that search depth; the decisive argument is
structural and follows. We verified in
\textsf{GAP} that the subgroup $H=\langle y^{-1}x^{-1}yx,\ y^2x^{-2},\
x^{-1}y^2x^{-1}\rangle$ is normal of index $8$ in $\GThree$, with
$\GThree^{\mathrm{ab}}=\Z_4\times\Z_4$ and $H^{\mathrm{ab}}=\Z^2\times\Z_8$,
and that on a Reidemeister--Schreier basis $H$ has the three-generator,
three-relator presentation
\[
 H=\langle k_1,z,k_2 \mid z\ \text{central},\ [k_2,k_1]=z^{-8}\rangle,
\]
the torsion-free class-$2$ nilpotent \emph{Heisenberg group of step $8$}, of
Hirsch length $3$ (verified nonabelian in \textsf{GAP}). Since the finite
quotient is $\GThree/H\cong C_4\times C_2$, the group $\GThree$ is
(Heisenberg-$8$)-by-$(C_4\times C_2)$ --- a Nil-geometry group, unlike the
virtually abelian Bieberbach group $\P$ --- which both confirms
$\GThree\not\cong\P$ structurally and reduces the
construction of a faithful model of $\GThree$ to a Heisenberg-type
extension, exactly as for $H_4$ in \cite{companion}.

\subsection*{Attribution: what about \texorpdfstring{$\GThree$}{G3} is already
known} None of the structure just recorded is new, and we set the record
straight before using it. Soelberg's thesis \cite[Thm.~3.1]{Soelberg}
already identifies an index-$8$ normal subgroup of $\GThree$ --- there
generated by $yxy^{-1}x^{-1}$, $y^2$ and $x^4$ --- as a Heisenberg group of
step $8$ and Hirsch length $3$, with a normal form, and already proves
torsion-freeness of $\GThree$ by exactly the device we re-run: passing to
the maximal elementary abelian $2$-quotient and checking the squares of
coset representatives. Independently, Gardam \cite[\S4]{GardamC} studies the
same group under the presentation
$S=\langle x,y\mid (xy)^2(xy^{-1})^2,\ (yx)^2(yx^{-1})^2\rangle$ --- the two
relator sets are equivalent, since the second relator above is a conjugate
of Gardam's first, and substituting its consequence $(yx^{-1})^2=(xy)^2$
into the first relator above yields Gardam's second --- and proves there
that $S$ is torsion-free (as an amalgam of two Klein bottle groups over
$\Z^2$), that $\langle x^2,y^2\rangle$ is an integral Heisenberg group of
index $16$, and hence that $S$ is virtually nilpotent but \emph{not}
virtually abelian; he also exhibits a faithful $3\times3$ integral matrix
representation and the order-$4$ automorphism $x\mapsto y$, $y\mapsto
x^{-1}$. The section is titled ``Beyond virtually abelian groups'', so the
point that $\GThree\not\cong\P$ for structural reasons is precisely its
subject.

Our increment over these is therefore narrow and should be read as such:
the identification of the finite quotient as $C_4\times C_2$ (rather than
merely as a group of order $8$), an explicit Reidemeister--Schreier basis,
and --- the only part the rest of this paper actually needs --- a model in
\emph{search coordinates}, with the certificate chain of
Section~\ref{subsec:faithful}, in which balls can be enumerated and
constraint problems posed. Gardam's matrix representation is far more
compact and is the right tool for hand computation; it is not the right
tool for an exhaustive ball search, which is why we build the coset model.
The scan of $\Ball5\times\Ball5$ reported above is likewise an independent
confirmation, at search depth, of something \cite[\S4]{GardamC} settles
structurally.

We carried this construction out. On the Reidemeister--Schreier basis above,
extracted with Tietze tracking so that the basis is explicit ---
$k_1=yxy^{-1}x^{-1}$, $z=xyx^{-1}y^{-1}x^{-1}y^{-2}x^{-1}$,
$k_2=xy^{-2}xyxyx$ --- the extraction and certificate chain of
Section~\ref{subsec:faithful} yields a faithful $(\text{coset},\Z^3)$ model
of $\GThree$ with collection law
$(a,b,c)(a',b',c')=(a+a',\,b+b'-8ca',\,c+c')$. The model reproduces the
Nielsen--Soelberg two-sided pair of \cite[\S4]{NS} exactly: all fourteen
defining quadruples of their set $Z$ hold, $|A|=|B|=8$, and $A\cdot B$ has
$31$ values with coincidence fingerprint $2^{29}3^{2}$ --- matching the
multiplicity pattern reported in \cite[\S4]{NS} (all classes of size two
except two of size three). Torsion-freeness of $\GThree$ (verified in
\cite{NS}) also drops out of the model in closed form: in each of the three
involutive cosets of $\GThree/H$ the squaring equation $(c,h)^2=\id$ is
obstructed modulo $2$, so $\GThree$ has no element of order $2$; since
$\GThree/H\cong C_4\times C_2$ has exponent $4$ and $H$ is torsion-free,
every torsion element would have order dividing $4$, and an element of
order $4$ squares to one of order $2$ --- so $\GThree$ is torsion-free.

\begin{theorem}\label{thm:g3sym}
The universal group $\GThree$ admits a symmetric non-UP witness: there is a
$15$-element set $A\subset\Ball4\subset\GThree$, with $\id\notin A$ and
$A\ne A^{-1}$, such that every element of $A\cdot A$ ($94$ values,
coincidence fingerprint $2^{73}3^{12}4^{5}5^{1}6^{3}$) is represented at
least twice. Within the searched balls it is minimal: no symmetric witness
of size $\le16$ is contained in $\Ball3$, and none of size $8$--$14$ in
$\Ball4$ or $\Ball5$ ($299$ elements). In particular $15$ is exactly minimal among
symmetric witnesses contained in $\Ball5$, and $m_1(\GThree)\in[8,15]$.
\end{theorem}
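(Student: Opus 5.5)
The proof has three parts: build the model and the balls, check an explicit witness without trusting any solver, and then run bounded non-existence searches.

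\emph{Model and balls.} We work throughout in the faithful $(\text{coset},\Z^3)$ model of $\GThree$ constructed above, with collection law $(a,b,c)(a',b',c')=(a+a',\,b+b'-8ca',\,c+c')$. Its correctness reduces to the certificate chain of Section~\ref{subsec:faithful}. In this model we enumerate $\Ball3$, $\Ball4$ and $\Ball5$ for the generating set $\{x^{\pm1},y^{\pm1}\}$ by breadth-first search, deduplicating by exact coordinates; the last ball has $299$ elements.

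\emph{Existence, solver-free.} We pose the symmetric non-UP CP-SAT model over $\Ball4$ at $|A|=15$. Boolean variables mark membership in $A$. For each product value $g$ in the ball's product table we impose the constraint that, if some pair $(a,b)\in A\times A$ has $ab=g$, then at least two such pairs exist. To speed the search we add the hint suggested by the coset structure: one element of $A$ in the trivial coset of $H$, and two in each nontrivial coset of $\GThree/H\cong C_4\times C_2$. The returned set $A$ is then re-verified with no solver in the loop:
\begin{itemize}
\item form all $225$ products in the model and group them by equal coordinates;
\item check that every class has size at least $2$;
\item record the fingerprint $2^{73}3^{12}4^{5}5^{1}6^{3}$, which accounts for $94$ values and sums to $225$;
\item check directly that $\id\notin A$ and $A\neq A^{-1}$;
\item certify membership in $\Ball4$ by a breadth-first-search geodesic length of at most $4$ for each element.
\end{itemize}
As an independent cross-check, each element is also rewritten as an explicit word and checked by free reduction against the relators.

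\emph{Ball-limited minimality.} We run the same model with the cardinality fixed in turn, and need \texttt{INFEASIBLE} in every case:
\begin{itemize}
\item $|A|=n$ for each $8\le n\le16$ over $\Ball3$;
\item $8\le n\le14$ over $\Ball4$;
\item $8\le n\le14$ over $\Ball5$.
\end{itemize}
Sizes below $8$ need no search, by the Nielsen--Soelberg bound $|A|\ge8$ valid in every torsion-free group; torsion-freeness of $\GThree$ was derived above. Combining the infeasibilities with the witness gives exact minimality $15$ among symmetric witnesses contained in $\Ball5$, and hence $m_1(\GThree)\in[8,15]$. To remove solver trust, we re-encode the infeasible instances as CNF, with cardinality constraints via sequential counters, and certify each with a DRAT proof checked by \textsf{drat-trim}.

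\emph{Main obstacle.} The hard step is the $\Ball5$ infeasibility at $n=13,14$. With $299$ elements the product table has roughly $9\times10^4$ pairs and the search space is huge. I would first try symmetry breaking, which is compatible with centring at $\id$ since the symmetric non-UP property is preserved by automorphisms:
\begin{itemize}
\item use the order-$4$ automorphism $x\mapsto y$, $y\mapsto x^{-1}$, which preserves the ball, together with inversion $A\mapsto A^{-1}$;
\item add lexicographic leader constraints for these symmetries.
\end{itemize}
If that is insufficient, I would split by coset profile (the number of elements of $A$ in each of the eight cosets). A counting argument on products within each coset pair prunes most profiles, and the remaining cases are solved separately, which keeps each certificate small enough to check.
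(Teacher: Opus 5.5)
Your proposal matches the paper's argument: a solver-found witness re-verified solver-free in the certified coset model, together with \texttt{INFEASIBLE} verdicts (re-derived as \textsf{drat-trim}-checked DRAT proofs) for sizes $8$--$16$ in $\Ball3$ and $8$--$14$ in $\Ball4$ and $\Ball5$, with sizes below $8$ excluded by the Nielsen--Soelberg bound. The one step to tighten is your ``free reduction against the relators'' cross-check, since free reduction alone cannot decide equality in $\GThree$. The paper's model-independent certificate instead replays explicit van Kampen derivations for the $131$ intra-class coincidences and separates the $15$ elements by a homomorphism onto a $216$-point permutation image. That combination suffices because merging coincidence classes can only increase multiplicities.
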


All claims are certificate-backed: the witness was found by CP-SAT,
re-verified solver-free, and independently re-verified through \textsf{GAP}
rewriting alone (each element rebuilt as the word $k_1^az^bk_2^ct_c$ and all
$225$ products normalized by coset-table rewriting); the minimality claims
were derived twice --- by CP-SAT (\texttt{INFEASIBLE}) and independently by
DRAT-producing SAT solvers whose unsatisfiability proofs were verified with
the \textsf{drat-trim} checker, for every size in every ball stated --- and
the infeasibility of sizes
$2$--$7$ --- forced by the global bound of \cite[Thm.~1.2]{NS} --- serves as
one more cross-check of the model. For the $\Ball3$ minimality (no non-UP
set of any size $2$--$16$) a stronger check is available and was carried
out: each size was re-modelled at the \emph{constraint} layer in the
Glasgow Constraint Solver and the resulting pseudo-Boolean proof verified
by \textsf{VeriPB}, so for $\Ball3$ not even the encoding into clauses is
trusted. The witness, moreover, carries a
\emph{zero-trust} certificate independent of any computer-algebra system:
all $131$ intra-class product coincidences admit explicit van Kampen
derivations (sequences of single relator applications, replayed by free
reduction), and the $15$ elements are separated by an explicit homomorphism
onto a $216$-point permutation image whose defining property --- that both
relators act trivially --- is itself part of the check. Since merging
coincidence classes can only increase multiplicities, these certificates
alone prove the theorem. The witness has a striking coset
structure: exactly one element in the trivial coset of $\GThree/H$ ---
namely the generator $z$ of the centre of $H$ (not central in $\GThree$:
conjugation inverts it, $xzx^{-1}=z^{-1}$) --- and exactly two in each of the
seven nontrivial cosets; the odd cardinality $15$ enters through this
singleton. In particular $\GThree$ does admit a symmetric witness, although,
in contrast to its quotients $\GOne$ and $\GTwo$, apparently not at the
global minimum $8$.

\begin{remark}[the existence question is settled by a general principle]
\label{rem:symexists}
The question raised in \cite{companion} --- whether the universal group
admits a symmetric witness at all --- was one on which Nielsen expected a
negative answer (private communication), and the set above answers it in
the affirmative for $\GThree$. The first version of this paper recorded
that we did not know whether the existence half was a special case of a
general principle: Gardam's lecture notes \cite[Exercise 1.6.2]{GardamNotes}
ask the reader to show that if a group fails the unique product property
then some finite $A$ has $A\cdot A$ without a unique product, and we had
been unable to reconstruct an argument --- Strojnowski's device
\cite{Strojnowski}, which converts a set pair with a single unique product
into a pair with none, produces $C=B^{-1}A$ and $D=BA^{-1}$, and these are
not symmetric. Gardam has since settled the matter for us (private
communication, July 2026) by pointing to Theorem~1 of Strojnowski's paper
\cite{Strojnowski}, which establishes the symmetric form directly: if $G$
is not a u.p.\ group, then some nonempty finite subset $X$ of $G$ has no
element of $X\cdot X$ with a unique representation $xy$, $x,y\in X$ ---
and the argument yields such $X$ of arbitrarily large size. The fact is
stated in exactly this form, with the same attribution, in the
introduction of \cite{CraigLinnell}. The exercise is thus correct as
stated: the passage from a two-sided witness to a symmetric one is
automatic in every group, $m_2(G)<\infty$ implies $m_1(G)<\infty$, and the
existence half of Theorem~\ref{thm:g3sym} is an instance of this general
principle. What the principle does not give is any of the quantitative
content, which is what this paper claims: the value $15$, its exact
minimality within $\Ball5$, the forced coset profile and central anchor,
and the census of exactly four witnesses in $\Ball4$ all lie beyond it.
\end{remark}

Moreover
$m_2(\GThree)=16$ exactly (the pair above gives $\le16$; the lower bound is
\cite[Thm.~1.4]{NS}), so the universal group attains the two-sided global
minimum while sitting --- at least within $\Ball4$ --- strictly above the
symmetric one, and $\delta(\GThree)=2m_1-m_2\in[0,14]$: if $m_1(\GThree)=15$
held globally, $\delta(\GThree)=14$ would exceed every value observed so
far.

\emph{Symmetry.} The universal group is markedly more symmetric than its
quotient $\P$. Each of the eight monomial maps sending $(x,y)$ to a signed
pair in $\{x^{\pm1}\}\times\{y^{\pm1}\}$ or $\{y^{\pm1}\}\times\{x^{\pm1}\}$
carries both relators to the identity, hence extends to an automorphism;
these eight maps are ball isometries (they permute the generating set), they
are closed under composition, and their order distribution $\{1{:}1,\,2{:}5,\,
4{:}2\}$ identifies the group as the dihedral group $D_4$ of order $8$ --- not
the elementary abelian $(\Z/2)^3$, which would give $\{1{:}1,\,2{:}7\}$. One
of the two order-$4$ elements is the automorphism $x\mapsto y$, $y\mapsto
x^{-1}$ used by Gardam \cite[\S4]{GardamC}; what is added here is that the
full ball-isometry automorphism group is $D_4$. By
contrast the only nontrivial ball-isometry automorphism of $\P$ in this
metric is the swap $x\leftrightarrow y$, an order-$2$ group. (All eight
automorphisms are verified in the certified-faithful model of
Section~\ref{subsec:faithful}; adjoining the anti-automorphism
$g\mapsto g^{-1}$ gives the order-$16$ group under which symmetric non-UP
sets are counted up to equivalence.)

\emph{Staircase.} In the radius-$3$ ball, where $\GThree$ carries no non-UP
set at all, the unique-product staircase $u(n)$ rises rather than falls:
$u(n)=2$ for $2\le n\le7$, then $u(8)=u(9)=4$ and $u(10)=u(11)=6$ (all
\texttt{OPTIMAL}, each witness re-verified solver-free; the tail $n\ge12$ is
bounded but not pinned within our budget). That $u(n)>0$ throughout is an
independent cross-check of the $\Ball3$ minimality certified in
Section~\ref{sec:open} (a value $0$ would exhibit a non-UP set), and as in
$\P$ and $H_4$ the value $1$ never occurs --- the curve steps $2\to4\to6$. The
staircase in the witness ball $\Ball4$, where $u(15)=0$, is beyond our present
solver budget.

\emph{Rigidity and census.} The \emph{shape} of a $15$-element symmetric
witness in $\Ball4$ is sharply constrained, in the spirit of the distribution
rigidity of $\P$ \cite{companion}, and this rigidity in turn makes the full
census tractable. Two facts are certified by infeasibility of the
complementary constraint. First, the coset distribution is forced: every such
witness has exactly one element in the trivial coset of $\GThree/H$ and
exactly two in each of the seven nontrivial cosets --- no other of the
possible profiles occurs, even though the pool $\Ball4$ is far from balanced
across cosets (sizes $27,10,10,10,10,20,24,24$). Second, the trivial-coset
element is forced to be one of just two of the $27$ candidates there: the
central generator $z$ or its inverse $z^{-1}=xzx^{-1}$, which lie in a single
orbit under $\langle D_4, g\mapsto g^{-1}\rangle$. Thus every minimal symmetric
witness is anchored at the centre of $H$ and evenly spread across the cosets.

With the singleton pinned to $z$ (the $z^{-1}$-anchored witnesses are exactly
the inverse-images of the $z$-anchored ones), a solve-and-block enumeration
terminates: there are exactly two $z$-anchored witnesses, and the search for a
third returns infeasible. Hence $\GThree$ has \emph{exactly four} minimal
symmetric non-UP sets in $\Ball4$, forming a single orbit of size four under
$\langle D_4, g\mapsto g^{-1}\rangle$ (order $16$). Each of the four is
re-verified to be a genuine non-UP $15$-set directly from the definition
(solver-free), and the enumerating encoding was validated against the definition
on a sample of structured candidates. Completeness does not even rest on the
rigidity lemmas above: blocking the four witnesses and asking for any further
$15$-element non-UP set in $\Ball4$ is unsatisfiable, and this instance carries a
DRAT proof (from \textsf{glucose}) machine-checked by \textsf{drat-trim}, so the
count of four is certified at the same standard as the other non-existence
claims. This places $\GThree$ alongside
$\P$, $H_4$ ($16$ each \cite{companion}) and $\GTwo$ ($2$) in the last row of
Table~\ref{tab:compare}: strikingly, the universal group with the \emph{largest}
ball-isometry symmetry ($D_4$) has the \emph{fewest} minimal witnesses after
$\GTwo$, all fused into one orbit.

\subsection*{Census at radius \texorpdfstring{$7$}{7} in
\texorpdfstring{$\GOne$}{G1}}
Whether $A_1$ and its inverse are the only $8$-element non-UP sets in
$\Ball7\subset\GOne$ ($1935$ elements) is open; the enumeration is beyond
our present solver budget. Given Proposition~\ref{prop:census2} and the
exact counts $16$ and $16$ for $\P$ and $H_4$ \cite{companion}, the
minimal-witness census appears to be a meaningful invariant, and $\GOne$ is
the natural next entry.

\subsection*{The asymmetry invariant}
With $m_1$ the least symmetric and $m_2$ the least two-sided size,
$\delta=2m_1-m_2$ satisfies $\delta(\GOne)=\delta(\GTwo)=0$ exactly (both
$m_1=8$, $m_2=16$), against the ball-limited values $\delta(\P)=4$ and
$\delta(H_4)=10$ of \cite{companion}. The universal group now contributes
$\delta(\GThree)\in[0,14]$ with $m_2(\GThree)=16$ exact
(Theorem~\ref{thm:g3sym}): a group can attain the two-sided minimum without
(apparently) attaining the symmetric one. Whether $\delta$ is bounded over
torsion-free non-UP groups remains open; the extremal groups sit at the
bottom in both coordinates simultaneously.

\section*{Code and data availability}
The models are pairs (coset tables, class-$2$ collection data) extracted by
GAP (\texttt{AugmentedCosetTableMtc} rewriting) and consumed by exact Python
implementations; the extraction scripts, the extracted tables, the
solver-free verifiers, the random-word cross-check data, all witnesses, and
all \texttt{INFEASIBLE} logs, the van Kampen / zero-trust certificate with
its stdlib-only checker, the DRAT verification logs, and the
constraint-layer (Glasgow / \textsf{VeriPB}) models and verification log are
archived with the author and available on
request (they will also accompany the arXiv submission as ancillary files),
alongside one-command audit scripts re-verifying every witness-side claim in
this paper without invoking a solver. Computations used Google's CP-SAT
\cite{ortools} on a single $8$-core workstation.

\section*{Acknowledgements}
We thank Pace Nielsen for prompt and helpful correspondence on the questions
of Section~\ref{sec:open}, and Giles Gardam for resolving the question of
Remark~\ref{rem:symexists} by pointing us to Theorem~1 of
\cite{Strojnowski}. Computations used Google's CP-SAT solver
\cite{ortools} and GAP~4.15 \cite{GAP}; all certificates are exact.

\end{document}